\newcommand{\eome}{{\rm \nabla\kern-.6em \nabla}}
\newcommand{\diag}{\mathrm{diag}}
\newtheorem{definition}{Definition}
\newtheorem{theo}{Theorem}
\newtheorem{lemma}{Lemma}
\newtheorem{remarkTh}{Remark}
\newcommand{\Chi}{\mathfrak{X}}
\newcommand{\supeq}{\geqslant}
\newcommand{\infeq}{\leqslant}
\newcommand{\V}{\mathcal{V}}
\newcommand{\R}{\mathbb{R}}
\renewcommand{\S}{\mathbb{S}}
\newenvironment{proof}[1][Proof]{\textbf{#1.} }{\ \rule{0.5em}{0.5em}}
\newenvironment{remark}[0]{\begin{remarkTh}}{\hfill$\square$\end{remarkTh}}
\DeclareMathOperator\He{He}
\DeclareMathOperator\sign{sign}
\renewcommand\epsilon{\varepsilon}
\title{\LARGE \bf Exponential Lyapunov Stability Analysis of a Drilling Mechanism}
\author{Matthieu~Barreau, Alexandre~Seuret and Fr\'ed\'eric~Gouaisbaut
\thanks{M. Barreau, A. Seuret, F. Gouaisbaut are with LAAS - CNRS, Universit\'e de Toulouse, CNRS, UPS, France e-mail: (mbarreau,aseuret,fgouaisb@laas.fr).}
}
\begin{document}

\maketitle
\thispagestyle{empty}
\pagestyle{empty}

\begin{abstract}
This article deals with the stability analysis of a drilling system which is modelled as a coupled ordinary differential equation / string equation. 
The string is damped at the two boundaries but leading to a stable open-loop system. The aim is to derive a linear matrix inequality ensuring the exponential stability with a guaranteed decay-rate of this interconnected system. A strictly proper dynamic controller based on boundary measurements is proposed to accelerate the system dynamics and its effects are investigated through the stability theorem and simulations. It results in an efficient finite dimension controller which subsequently improves the system performances. 
\end{abstract}

\section{Introduction}

Many physical situations like string-payloads \cite{he2014adaptive} or drilling systems \cite{bresch2014output} are modeled by infinite dimensional systems. They are, in their fundamentals, related to a Partial Differential Equation (PDE) and consequently, their stability analysis and control are not straightforward and has been under active research during the last decade.

A drilling mechanism is within this class of systems. It is used in the industry to pump oil deep in the soil. This physical system is subject to torsion and radial deformation due to the torque applied on one boundary of the pipe. This system is usually modeled by a coupled Ordinary Differential Equation (ODE) / string equation. These heterogeneous equations appear naturally when the torsional motion of the pit is coupled with the axial deformation of the pipe \cite{challamel2000rock}. Moreover, as there is friction all along the pipe, it leads to a complex system made up of two non-linear equations. The commonly used methodology to control this system is the backstepping. 

The aim is to use a control to transform the problem into a target system with the desired properties. Then, using a Lyapunov approach for example, the stability can be proven. This has been widely used in \cite{bresch2014output,krstic2009delay,krstic2008boundary,Wu20142787}. There are many advantages because it provides a Lyapunov functional useful for a robustness analysis for example but it also provides a very accurate control as it mostly depends on the target system. But the calculations are tedious and lead to an infinite dimension control law which may be subjected to implementation issues.

Coming from the stability analysis of time-delay systems, a new method based on Linear Matrix Inequalities (LMIs) seems to be promising. As time-delay systems are a particular case of infinite dimension systems \cite{fridman2014}, it is possible to extend the methodology described in \cite{seuret:hal-01065142} to other systems. It relies on a Lyapunov functional and a state extension using projections of the infinite dimensional state on a basis of orthonormal polynomials. The key result is based on an extensive use of Bessel inequality. It has been successfully applied to transport equations in \cite{SAFI20171}, to the heat equation \cite{baudouinHeat} and to the wave equation also \cite{besselString}. 

In this paper, we focus on the exponential stability analysis of a linearized drilling mechanism as described in \cite{marquez2015analysis} with the previous methodology. First, we explain the problem and discuss the existence of a solution. Then, an exponential stability result is provided. The theorem ensures the exponential stability with a guaranteed decay-rate. Some necessary conditions are drawn from the LMI condition and then, an example using physical values is provided. A control law is also derived to show the effectiveness of the method.

\textbf{Notations:} In this paper, $\mathbb{R}^+ = [0, +\infty)$ and $(x,t) \mapsto u(x,t)$ is a multi-variable function from $[0,1] \times \mathbb{R}^+$ to $\mathbb{R}$.
The notation $u_t$ stands for $\frac{\partial u}{\partial t}$. We also use the notations $L^2 = L^2((0, 1); \mathbb{R})$ and for the Sobolov spaces: $H^n = \{ z \in L^2; \forall m \infeq n, \frac{\partial^m z}{\partial x^m} \in L^2 \}$. 
The norm in $L^2$ is $\|z\|^2 =  \int_{\Omega} |z(x)|^2  dx = \left<z,z\right>$. 
For any square matrices $A$ and $B$, the operations '$\text{He}$' and '$\text{diag}$' are defined as follow: $\text{He}(A) = A + A^{\top}$ and $\text{diag}(A,B) = \left[ \begin{smallmatrix}A & 0\\ 0 & B \end{smallmatrix} \right]$.
A positive definite matrix $P \in \R^{n \times n}$ belongs to the set $\S^n_+$ and $P \succ 0$.

\section{Problem Statement}

\subsection{Modeling of the drilling process}

A drilling mechanism was first modeled in \cite{fridman2010bounds} using the work of \cite{challamel2000rock}. This system described in Figure~\ref{fig:drilling} is the result of a coupling between a radial deformation and an axial movement. This coupling was later modeled in \cite{marquez2015analysis,saldivar2016control} by the following nonlinear model for $x \in (0, 1)$ and $t > 0$:
\begin{equation} \label{eq:problem}
	\hspace{-0.05cm}
	\left\{
		\begin{array}{ll}
			z_{tt}(x,t) = c^2 z_{xx}(x,t) - d z_t(x,t),\\
			z_x(0,t) = g \left( z_t(0,t) - \tilde{u}_1(t) \right), \\
			z_x(1,t) = -h z_{tt}(1,t) - k z_t (1,t)  - q T_{nl}( z_t(1,t) ), \\
			\dot{Y}(t) = A Y(t) + B \tilde{u}_2(t) + E_1 z_t(1,t) + E_2 T_{nl} (z_t(1,t)), \\
		\end{array}
	\right.
	\hspace{-0.9cm}
\end{equation}
with initial condition $z(\cdot,0) = z^0$, $z_t(\cdot,0) = z_t^0$ on $(0, 1)$ and $Y(0) = Y^0$. In this model, $z$ is the twist angle and it propagates along the pipe following a damped wave equation of speed $c$ and internal damping $d$. Since the internal damping stabilizes the system, in this study, we consider the worst case scenario with $d = 0$ like in \cite{fridman2010bounds}. A similar work can be done with $d > 0$ but leads to more tedious calculation and is then omitted.
There are two boundary conditions at $x= 0$ and $x = 1$. At $x = 0$, a rotary table whose speed is controlled by the input $\tilde{u}_1$ allows to twist the pipe. Furthermore, the boundary damping with a coefficient $g$ at $x = 0$ represents a viscous friction torque.\\
\begin{figure}
	\centering
	\includegraphics[width=8cm]{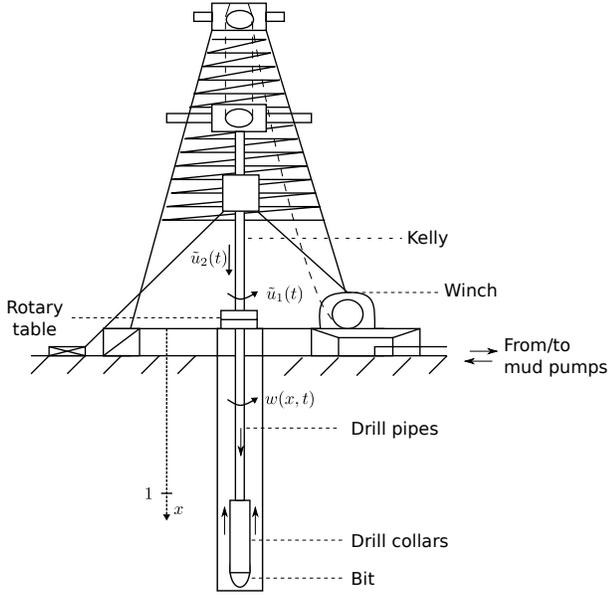}
	\caption{Schematic of a drilling mechanism originally taken from \cite{saldivar2016control}. Data corresponding to physical vaues are given in Table~\ref{tab:values}.}
	\label{fig:drilling}
\end{figure}
\!\!\!The drilling pit is located at $x = 1$. When drilling, an external torque applies at this boundary and the momentum equation leads to a second order in time boundary condition. The term $T_{nl}$ is a non-linear function related to the change of torque and given below. To simplify the system as done in \cite{fridman2010bounds}, we consider the equation at the bottom of the pipe to be only a first order boundary damping, then $h = 0$. \\
The axial deformation is modeled by a finite dimensional equation as noted in \cite{challamel2000rock}. This equation is related to the axial deformation of the pipe. In \cite{saldivar2016control}, a second order damped harmonic oscillator is used because it models a mass subject to a force for small vibrations. The control at $x = 0$ for the axial position is $t \mapsto \tilde{u}_2(t)$ and corresponds to the force needed in the system to drill. Denoting by $y$ the axial bit position and by $\Gamma_0$ the rate of penetration, $Y(t) = \left[ y(t) - \Gamma_0 t \ \ \dot{y}(t) - \Gamma_0 \right]^{\top} \in \mathbb{R}^2$ represents the axial position error and axial velocity error, leading to the last equation in \eqref{eq:problem}.
\begin{remark} Note that this model does not take into account a coupling between torsion and axial deformation but more a cascaded effect between them.\end{remark}
The parameters $c,g,k,q,A_{21}, A_{22}, b, e_1$ and $e_2$ are physical parameters given in \cite{saldivar2016control} and reported in Table~\ref{tab:values}. The matrices have the following structure:
\[
	\begin{array}{cccc}
		A = \left[ \begin{smallmatrix} 0 & 1 \\ A_{21} & A_{22} \end{smallmatrix} \right], & B = \left[ \begin{smallmatrix} 0 \\ b \end{smallmatrix} \right], & 
		E_1 = \left[ \begin{smallmatrix} 0 \\ e_1 \end{smallmatrix} \right], & E_2 = \left[ \begin{smallmatrix} 0 \\ e_2 \end{smallmatrix} \right].
	\end{array}	
\]

The aim is to design control laws $\tilde{u}_1$ and $\tilde{u}_2$ such that the angular speed $z_t(1,t)$ in system \eqref{eq:problem} converges to the desired angular velocity $\Omega_e$ and $Y$ to $0$. Without loss of generality, we assume $\Omega_e > 0$.

In \cite{challamel2000rock,saldivar2016control}, the nonlinear part of the torque is described by the following equations for $\theta \in \R$:
\begin{equation} \label{eq:Tnl}
	\left\{
		\begin{array}{l}
			T_{nl}(\theta) = W _{ob} R_{b} \mu_b(\theta) \sign(\theta), \\
			\mu_b(\theta) = \mu_{cb} + \left(\mu_{sb} - \mu_{cb}\right) e^{-\gamma_b |\theta|}.
		\end{array}
	\right.
\end{equation}

Considering $\Omega_e \gg 0$, then $e^{- \gamma_b \Omega_e}$ is small and $T_{nl}$ is linearized around $\Omega_e$ as follows:
\begin{equation} \label{eq:T}
	T_{nl}(z_t(1,t)) \simeq W _{ob} R_{b} \mu_{cb} = T^e.
\end{equation}
\begin{remark} This approximation prevents from the stick-slip effect which is the main problem that occurs when dealing with drilling pipes for small $\Omega_e$. This work can be seen as a preliminary version of an extended one considering the non-linearity. \end{remark}
That leads to an approximated linear system defined for $t \supeq 0$ with the same initial conditions and $x \in (0, 1)$:
\begin{equation} \label{eq:problem}
	\hspace{-0.05cm}
	\left\{
		\begin{array}{l}
			w_{tt}(x,t) = c^2 w_{xx}(x,t), \\
			w_x(0,t) = g \left( w_t(0,t) - \tilde{u}_1(t) \right),  \\
			w_x(1,t) = - k w_t (1,t) - q T^e \\
			\dot{Y}(t) = A Y(t) + B \tilde{u}_2(t) + w_t(1,t) E_1 - T^e E_2.
		\end{array}
	\right.
	\hspace{-1cm}
\end{equation}

It is possible to use the Riemann coordinates to simplify the writing of this system using the following variable: $\tilde{\chi}(x,t) = \left[ \begin{smallmatrix} w_t(x,t) + c w_x(x,t) \\ w_t(1-x,t) - c w_x(1-x,t) \end{smallmatrix} \right]$.
The system becomes for $t \supeq 0$:
\begin{equation} \label{eq:linProblem}
	\left\{
		\begin{array}{l}
			\tilde{\chi}_{t}(x,t) = c \tilde{\chi}_{x}(x,t), \quad \quad x \in (0, 1), \\
			\left[ \begin{smallmatrix} 1-cg & 0 \\ 0 & 1-ck \end{smallmatrix} \right] \tilde{\chi}(0,t) = \left[ \begin{smallmatrix} 0 & 1+cg \\ 1+ck & 0 \end{smallmatrix} \right] \tilde{\chi}(1,t) + \left[ \begin{smallmatrix} - 2 cg \tilde{u}_1(t) \\ 2 c q T^e \end{smallmatrix} \right], \\
			\dot{Y}(t) = A Y(t) + B \tilde{u}_2(t) + \tilde{E}_1 \left[ \begin{smallmatrix} \tilde{\chi}(0,t) \\ \tilde{\chi}(1,t) \end{smallmatrix} \right] - T^e E_2,
		\end{array}
	\right.
\end{equation}
with $\tilde{E}_1 = \frac{1}{2} E_1 \left[ \begin{smallmatrix} 0 & 1 & 1 & 0 \end{smallmatrix}\right]$. The stability of system~\eqref{eq:linProblem} implies the stability of \eqref{eq:problem} and then the study focuses on system \eqref{eq:linProblem}.


Assuming $(\tilde{\chi}^e, Y^e)$ is an equilibrium point of system \eqref{eq:linProblem}, it satisfies $\tilde{\chi}^e_{t} = 0$,  $w^e_t = \Omega_e$ and $\dot{Y}^e = 0$. Therefore, a feedforward open-loop control is introduced as:
\begin{equation} \label{eq:feedforward}
	\tilde{u}_1^e = \Omega_e \left( 1 + \frac{k}{g} \right) + \frac{q}{g} T^e,  \quad \tilde{u}_2^e = \frac{T^e e_2 - \Omega_e e_1}{b}.
\end{equation}

Introducing the error variables $\chi(x,t) = \tilde{\chi}(x,t) - \tilde{\chi}^e(x)$, $u_1(t) = \tilde{u}_1(t) - \tilde{u}_1^e$ and $u_2 (t)= \tilde{u}_2(t) - \tilde{u}_2^e$,  the aim is to show the exponential stability of $\chi$ to $0$ in order to get $w_t \to \Omega_e$ and $\| Y \| \to 0$. The inputs $u_1$ and $u_2$ are assumed to be the results of a strictly proper dynamic controller whose inputs are $w_t(0,t), w_t(1,t)$ and $Y$. That means that the measurements are these three variables but it is not possible to apply exactly $w_t(1)$ or $w_t(0)$, corresponding to the situation where the actuator is bandwidth limited for instance. This assumption is important as the wave can be seen as a neutral system \cite{barreauInputOutput} and using directly $w_t$ means that we can affect directly the neutral part. This phenomena is known to be absolutely non-robust \cite{hale2001effects} to small delay for example. Assuming the controller is of order $n$, it is written for $t \supeq 0$:
\[
	\left\{
		\begin{array}{cl}
			\dot{X}_c(t) \!\!\!\!& = A_c X_c(t) + B_{c1} Y(t) + B_{c2} \left[ \begin{smallmatrix} w_t(0,t) \\ w_t(1,t) \end{smallmatrix} \right], \\
			u_1(t) \!\!\!\!& = C_1 \left[ \begin{smallmatrix} X_c(t) \\ Y(t) \end{smallmatrix} \right], \\
			u_2(t) \!\!\!\!& = C_2 X_c(t) + K Y(t).
		\end{array}
	\right.
\]
with $C_1, C_2 \in \mathbb{R}^{1 \times (n+2)}, A_c \in \mathbb{R}^{n \times n}, B_{c1}, B_{c2} \in \mathbb{R}^{n,2}$ and $K \in \mathbb{R}^{1 \times 2}$. The closed-loop system in Riemann coordinates can be rewritten as:
\begin{equation} \label{eq:linWaveProblem}
	\hspace{-0.15cm}
	\left\{
		\begin{array}{l}
			\chi_{t}(x,t) = c \chi_{x}(x,t), \\
			\left[ \begin{smallmatrix} 1-cg & 0 \\ 0 & 1-c k \end{smallmatrix} \right] {\chi}(0,t) = \left[ \begin{smallmatrix} 0 & 1+cg \\ 1+c k & 0 \end{smallmatrix} \right] {\chi}(1,t) - \left[ \begin{smallmatrix} 2 c g C_1 X(t) \\ 0 \end{smallmatrix} \right], \\
			\dot{X}(t) = \tilde{A} X(t) + \tilde{B} \left[ \begin{smallmatrix} \chi(0,t) \\ \chi(1,t) \end{smallmatrix} \right],
		\end{array}
	\right.
\end{equation}
with initial conditions $\chi(x, 0) = \chi_{0}(x), X(0) = X^0$, $X^{\top} \!=\! \left[ \begin{smallmatrix} X_c^{\top} & Y^{\top} \end{smallmatrix} \right]^{\top}$ and 
\[
	\tilde{A} \!=\! \left[ \begin{matrix} A_c & B_{c1} \\ B C_2 & A \!+\! BK \end{matrix} \right]\!\!,  \quad \quad \tilde{B} \!=\! \frac{1}{2}\left[ \begin{matrix} B_{c2} \\  \begin{array}{cc} E_1 & 0_{2, 1} \end{array} \end{matrix} \right] \left[ \begin{smallmatrix} 1 & 0 \\ 0 & 1 \\ 0 & 1 \\ 1 & 0 \end{smallmatrix} \right]^{\top}\!\!.
\]
\begin{remark} A similar control law is proposed in \cite{ATJECC} but the stability is dealt using another Lyapunov functional. \end{remark}
\begin{remark} From now on, to ease the reading, the parameter $t$ may be omitted and ${\chi}$ refers to a solution of \eqref{eq:linWaveProblem}. \end{remark}

\subsection{Existence and uniqueness}

The existence and uniqueness follows the same lines than in \cite{besselString}. Define the following set: $\mathcal{H}^m = \mathbb{R}^{n+2} \times H^m \times H^m$ with $m \in \mathbb{N}$. The space $\mathcal{H} = \mathcal{H}^0$ can be equipped with the following norm:
\[
	\begin{array}{ll}
		\forall (X, \chi) \in \mathcal{H}, \quad \|(X, \chi)\|_{\mathcal{H}} \!\!\!\!& = |X|^2 + \frac{1}{2} \|\chi\|^2 \\
		& = |X|^2 + c^2 \| w_x \|^2 + \| w_t \|^2.
	\end{array}
\]

Using the operator notation \cite{tucsnak2009observation}, system \eqref{eq:linWaveProblem} is formulated as follows:
\[
	T \left( \begin{smallmatrix} X \\ \chi \end{smallmatrix} \right) = \left( \begin{matrix} \tilde{A} X + \tilde{B} \left[ \begin{smallmatrix} \chi(0) \\ \chi(1) \end{smallmatrix} \right] \\ c \chi_x \end{matrix} \right),
\text{ and } T: \mathcal{D}(T) \to \mathcal{H},
\] with
\begin{multline*}
	\mathcal{D}(T) = \left\{ (X, \chi) \in \mathcal{H}^1, \left[ \begin{smallmatrix} 1-cg & 0 \\ 0 & 1-c k \end{smallmatrix} \right] \chi(0) = \right. \\
	\left. \left[ \begin{smallmatrix} 0 & 1+cg \\ 1+c k & 0 \end{smallmatrix} \right] \chi(1) - \left[ \begin{smallmatrix} 2 c g C_1 X \\ 0 \end{smallmatrix} \right] \right\}.
\end{multline*}

The existence of a continuous solution for $(X^0, \chi_0) \in \mathcal{D}(T)$ is ensured by applying Lumer-Philips theorem (for example in \cite[p.103]{tucsnak2009observation}) whose conditions are recalled below:
\begin{enumerate}
	\item there exists a function $V:\mathcal{H} \to \mathbb{R}^+$ such that its derivative along the trajectories of  \eqref{eq:linWaveProblem} is negative;
	\item there exists $\lambda$ sufficiently small such that $\mathcal{D}(T) \subseteq \mathcal{R}(\lambda I - T)$ where $\mathcal{R}$ is the range operator. 
\end{enumerate}

The first condition relies on the existence of a Lyapunov functional and is therefore the subject of the following part. The second statement needs some calculations very similar to the one conducted in \cite{besselString} or \cite{morgul1994dynamic}. For a given $\lambda > 0$, let $(r,f) \in \mathcal{D}(T)$, the aim is to prove the existence of $(X, \chi) \in \mathcal{D}(T)$ satisfying the following for $x \in (0, 1)$:
\[
	\left\{
		\begin{array}{l}
			\lambda X - \tilde{A} X - \tilde{B} \left[ \begin{smallmatrix} \chi(0) \\ \chi(1) \end{smallmatrix} \right] = r, \\
			\lambda \chi(x) - c \chi_x(x) = f(x). \\
		\end{array}
	\right.
\]

That leads to $\chi(x) = k_1 e^{\lambda\frac{x}{c}} + F(x)$ with $F(x) = c^{-1} \int_0^x e^{\lambda \frac{x-s}{c}} f(s) ds \in H^1$ and $k_1 = \diag(k_{11}, k_{12})$, $k_{11}, k_{12} \in \mathbb{R}$. Using the boundary conditions, we get a system of two equations:
\[
		\begin{array}{l}
			(1-cg) k_1 = k_2 e^{\frac{\lambda}{c}}(1+cg)(A+F(1)) - \frac{2cg}{\lambda} C_1 X, \\
			(1-c k) k_2 = k_1 e^{\frac{\lambda}{c}}(1+c k)(A+F(1))
		\end{array}
\]
Since there exists a $\lambda$ such that $\tilde{A} + \tilde{B} \left[ \begin{smallmatrix} \chi(0) \\ \chi(1) \end{smallmatrix} \right]$ is not the null matrix, then this system has a unique solution for a given $X$ that ends the proof of existence.

\section{Exponential Stability of the Drilling Pipe}

\subsection{Main result}

The main result of this paper is the $\alpha$-stability criterion for system \eqref{eq:linWaveProblem} expressed in terms of LMIs, therefore easily tractable. Let us first define the $\alpha$-stability.
\begin{definition} System \eqref{eq:linWaveProblem} is $\alpha$-stable (or exponentially stable with a decay-rate of at least $\alpha$) with respect to the norm $\| \cdot \|_{\mathcal H}$ if there exists $\gamma \supeq 1$ such that the following holds for $(X^0, \chi_0)$ the initial condition:
\[
	\|(X(t), \chi(\cdot,t))\|_{\mathcal H} \infeq \gamma \|(X^0, \chi_0 )\|_{\mathcal H} e^{- \alpha t}.
\]
\end{definition}
Considering this definition, we propose a stability theorem for system~\eqref{eq:linWaveProblem}.
\begin{theo} \label{sec:theoLin}
Let $N > 0$. Assume there exists $P_N \in \mathbb{S}^{n+2+2(N+1)}_+$, $R, S \in \mathbb{S}^2_+$ such that the following LMI holds:
\begin{equation} \label{eq:LMI}
	\Psi_{N,\alpha} - c R_N \prec 0,
\end{equation}
with
\begin{equation} \label{eq:defTheo}
	\begin{array}{cl}
		\!\!\!\!\!\! \Psi_{N,\alpha} \!\!\!\!& = \He((Z_N + \alpha F_N)^{\top} P_N F_N) - c G_N^{\top} S G_N \\
		& \hfill \!\!\!\!\!\! + c H_N^{\top} \left( S + R \right) H_N e^{\frac{2 \alpha}{c}},\\
		\!\!\!\!\!\! F_N \!\!\!\! &= \left[ \begin{matrix} I_{n+2+2(N+1)} & 0_{n+2+2(N+1), 2} \end{matrix} \right], \\
		\!\!\!\!\!\! Z_N \!\!\!\! &= \left[ \begin{matrix} \mathcal{N}_N^{\top} & \mathcal Z_{N}^{\top} \end{matrix} \right]^{\!\top}\!\!\!,  
		\quad \mathcal{N}_N = \left[ \begin{matrix} \tilde{A} & 0_{n+2,2(N+1)} & \tilde{B} \end{matrix} \right], \\
		\!\!\!\!\!\! \mathcal Z_{N} \!\!\!\! &= c \mathbb{1}_N H_{N}\! -\! c\bar{\mathbb{1}}_N G_{N}\! -\! \left[\begin{matrix}  0_{2(N + 1),n+2} &\!\!\!\! L_{N} &\!\!\!\! 0_{2(N + 1), 2}\end{matrix} \right], \\
	\end{array}
\end{equation}
\begin{equation*}
	\begin{array}{cl}
		\!\!\!\!\!\! G_{N} \!\!\!\! &= \left[ \begin{matrix} \begin{smallmatrix} - c g C_1 \\  0_{1,n+2} \end{smallmatrix} & 0_{2, 2(N+1)} & G \end{matrix} \right], \quad G = \left[ \begin{smallmatrix} 0 & 1 + cg \\  1 + c k & 0 \end{smallmatrix} \right], \\
		\!\!\!\!\!\! H_{N} \!\!\!\! &= \left[ \begin{matrix} \begin{smallmatrix} 0_{1,n+2} \\  c g C_1 \end{smallmatrix} & 0_{2, 2(N+1)} & H \end{matrix} \right],  \quad H = \left[ \begin{smallmatrix} 1 - c k & 0 \\  0 & 1 - c g \end{smallmatrix} \right], \\
		\!\!\!\!\!\! R_N  \!\!\!\!  &= \diag(0_n, R, 3R, \cdots, (2N+1)R, 0_{2}), \\
	\end{array}
\end{equation*}
\begin{equation*}
		\begin{array}{rcllcl}
		\!\!\! L_N\!=\!\left[\!\begin{smallmatrix} 
		\ell_{0,0}I_2 & \cdots& 0_2 \\ 
		\vdots & \ddots &\vdots\\ 
		\ell_{N,0}I_2 & \cdots & \ell_{N,N}I_2 \\ 
		\end{smallmatrix}\!\right]\!\!, \ 
		\mathbb{1}_N\!=\!\left[\!\begin{smallmatrix} I_2  \\ \vdots \\ I_2\end{smallmatrix}\!\right]\!\!, \
		\bar{\mathbb 1}_N\!=\!\left[\!\begin{smallmatrix}  I_2  \\  \vdots \\(-1)^{N} I_2\end{smallmatrix}\!\right],
	\end{array}
\end{equation*}
and $\ell_{k,j} = (2j+1)(1 - (-1)^{j+k})$ if $ j \infeq k$ and $0$ otherwise.\\
Then system \eqref{eq:linWaveProblem} is $\alpha$-exponentially stable.
\end{theo}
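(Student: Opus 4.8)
The plan is to construct a Lyapunov functional that augments the finite-dimensional state $X$ with the projections of the infinite-dimensional state onto a polynomial basis, and to show that the LMI \eqref{eq:LMI} forces its exponential decay. Concretely, I would take $V_N=V_P+V_S+V_R$ with
\[
	V_P = \begin{bmatrix} X \\ \mathcal{X}_N \end{bmatrix}^{\!\top}\! P_N \begin{bmatrix} X \\ \mathcal{X}_N \end{bmatrix}\!, \quad V_S = \int_0^1 e^{\frac{2\alpha}{c}x}\chi^{\top} S\,\chi\,dx, \quad V_R = \int_0^1 x\,e^{\frac{2\alpha}{c}x}\chi^{\top} R\,\chi\,dx,
\]
where $\mathcal{X}_N=[\mathcal{X}_0^{\top}\ \cdots\ \mathcal{X}_N^{\top}]^{\top}$ collects the coefficients $\mathcal{X}_k=\int_0^1 \ell_k(x)\chi(x)\,dx$ of $\chi$ on the first $N+1$ shifted Legendre polynomials $\ell_k$, normalised so that $\int_0^1 \ell_k\ell_j=\frac{1}{2k+1}\delta_{kj}$. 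The blocks $L_N$, $\mathbb{1}_N$, $\bar{\mathbb 1}_N$ and the scalars $\ell_{k,j}$ in \eqref{eq:defTheo} are precisely the structure constants of this basis (boundary values and derivative relations). The first task is to certify that $V_N$ is equivalent to $\|(X,\chi)\|_{\mathcal H}^2$: the lower bound $V_N\geq \epsilon\|(X,\chi)\|_{\mathcal H}^2$ follows from $P_N\succ 0$ (controlling $|X|^2$) together with $S\succ 0$ and the weight $e^{2\alpha x/c}\geq 1$ (controlling $\|\chi\|^2$), and the upper bound $V_N\leq M\|(X,\chi)\|_{\mathcal H}^2$ from the Cauchy--Schwarz estimate $|\mathcal{X}_k|^2\leq \frac{1}{2k+1}\|\chi\|^2$.

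The core of the argument is the evaluation of $\dot V_N+2\alpha V_N$ along \eqref{eq:linWaveProblem}, done first on the dense domain $\mathcal{D}(T)$ and then extended by density. Using $\chi_t=c\chi_x$, the derivative of $V_P$ gives $\He((Z_N+\alpha F_N)^{\top}P_N F_N)$, where the $\alpha F_N$ term accounts for $2\alpha V_P$ and $Z_N$ encodes the dynamics of the augmented vector: $\mathcal{N}_N$ is the ODE part $\dot X=\tilde A X+\tilde B[\chi(0)^{\top}\ \chi(1)^{\top}]^{\top}$, while $\mathcal{Z}_N$ comes from integrating by parts in $\dot{\mathcal{X}}_k=c\int_0^1\ell_k\chi_x=c[\ell_k\chi]_0^1-c\int_0^1\ell_k'\chi$, which expresses $\dot{\mathcal{X}}_k$ through the boundary values (the blocks $c\,\mathbb{1}_N H_N$ and $-c\,\bar{\mathbb 1}_N G_N$) and the lower-order coefficients (the block $L_N$). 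Differentiating $V_S$ and $V_R$ and using the chosen weights yields the boundary contributions $-cG_N^{\top}S G_N$ at $x=0$ and $c\,e^{2\alpha/c}H_N^{\top}(S+R)H_N$ at $x=1$, plus the residual negative integral $-c\int_0^1 e^{2\alpha x/c}\chi^{\top}R\,\chi\,dx$. Throughout, the boundary condition of \eqref{eq:linWaveProblem} is used to rewrite $\chi(0)$ as a linear function of $X$ and $\chi(1)$, which is exactly what permits both boundary values to be absorbed into the single augmented vector $\eta=[X^{\top}\ \mathcal{X}_N^{\top}\ \chi(1)^{\top}]^{\top}$ and what fixes the block structure of $G_N,H_N$ (the matrices $G,H$ being the boundary coefficients and the $C_1$ blocks coming from the dynamic-controller coupling at $x=0$).

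It then remains to dispose of the residual integral. Since $e^{2\alpha x/c}\geq 1$ on $(0,1)$ and $R\succ 0$, one has $-c\int_0^1 e^{2\alpha x/c}\chi^{\top}R\,\chi\,dx\leq -c\int_0^1\chi^{\top}R\,\chi\,dx$, and Bessel's inequality for the Legendre expansion gives $\int_0^1\chi^{\top}R\,\chi\,dx\geq \sum_{k=0}^N(2k+1)\mathcal{X}_k^{\top}R\,\mathcal{X}_k=\eta^{\top}R_N\eta$, which is exactly the $-cR_N$ term of \eqref{eq:LMI}. Collecting everything yields $\dot V_N+2\alpha V_N\leq \eta^{\top}(\Psi_{N,\alpha}-cR_N)\eta\leq 0$ by \eqref{eq:LMI}, hence $V_N(t)\leq V_N(0)e^{-2\alpha t}$, and the norm equivalence delivers the $\alpha$-stability estimate with $\gamma=\sqrt{M/\epsilon}\geq 1$. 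The main obstacle is the second paragraph: carrying out the integration by parts for $\dot{\mathcal{X}}_k$ and assembling every boundary term — with the exponential weights and the coupling through the boundary condition — into the compact matrices $Z_N$, $G_N$, $H_N$. The Bessel step and the final estimate are then routine, modulo the density argument that justifies the formal computation for merely $\mathcal{H}$-valued initial data.
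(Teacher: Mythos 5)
Your proposal follows essentially the same route as the paper's proof: the same augmented Lyapunov functional $X_N^{\top}P_N X_N+\int_0^1 e^{2\alpha x/c}\chi^{\top}(S+xR)\chi\,dx$, the same two-sided norm-equivalence bounds, the same integration by parts for the Legendre projections, and the same use of Bessel's inequality to absorb the residual integral into the $-cR_N$ block before invoking the LMI. The only differences are cosmetic (e.g., your augmented vector ends with $\chi(1)$ rather than the paper's $(w_t(1),w_t(0))$, which is just a congruence transformation of $\xi_N$), so nothing further is needed.
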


The proof of this theorem relies on the construction of a Lyapunov functional described in the following subsections.

\begin{remark} A necessary condition for \eqref{eq:LMI} to be fulfilled is that the last $2 \times 2$ diagonal block of \eqref{eq:LMI} must be definite negative corresponding to the following inequality:
\[
	H^{\top} (S+R) H e^{2 \frac{\alpha}{c}} - G^{\top} S G \prec 0.
\]

This condition implies:
\begin{equation} \label{eq:alphaMax}
	\alpha \infeq \alpha_{max} = \max \left( \frac{c}{2} \log \left| \frac{(c k + 1)(c g + 1)}{(c k - 1)(c g - 1)} \right|, 0 \right).
\end{equation}

Setting $g = 0$ or $k=0$ leads to the same maximal decay-rate than in \cite{barreauInputOutput,bastin2016stability,datko1991two}. This condition is also related to the $\tau$-stabilization which is a common phenomenon when considering a wave equation \cite{olgac2004practical}. 
One can notice that for $g > 0$ and $k > 0$, the PDE system itself is asymptotically stable, because the two boundary conditions are adding damping. Notice that if one of them is negative, there exist also values of the other coefficient making the system asymptotically stable. Note also that for $g = c^{-1}$ or $k=c^{-1}$ leads to $\alpha_{max} = + \infty$ meaning there is no neutral part and the system resumes to a time-delay system. For $d > 0$, the neutral part is not modified and the same limit can be observed. \end{remark}


\begin{remark}[Hierarchy] \label{sec:hierarchy} Define the following set:
\[
	\mathcal{C}_N = \left\{ \alpha \supeq 0 \ | \ \Psi_{N,\alpha} - R_N \prec 0, P_N \succ 0, R \succ 0, S \succ 0 \right\},
\]
and assume this set is not empty. Then, denote $\alpha_{N} = \sup \mathcal{C}_N$. The hierarchy property states that $\alpha_{N+1} \supeq \alpha_{N}$. This can be proved using the same strategy than in \cite{besselString,SAFI20171}.
\end{remark}

\subsection{Proof of Theorem~\ref{sec:theoLin}}

\subsubsection{Preliminaries}

The main contribution of this paper relies on the extensive use of Bessel inequality to encompass traditional results. Before stating this inequality, we need to introduce an orthonormal family. The definition is as follows:
\begin{definition}[Legendre polynomials] Let $N \in \mathbb{N}$, the family of Legendre polynomials of degree less than or equal to $N$ is denoted by $\{\mathcal{L}_\ell\}_{\ell \in [0, N]}$ with 
\[
	\mathcal{L}_\ell(x) = (-1)^\ell \sum_{l = 0}^\ell (-1)^l \left( \begin{smallmatrix} \ell \\ l \end{smallmatrix} \right) \left( \begin{smallmatrix} \ell+l \\ l \end{smallmatrix} \right) x^l
\]
with $\left( \begin{smallmatrix} \ell \\ l \end{smallmatrix} \right) = \frac{\ell!}{l! (\ell-l)!}$.
\end{definition}

The sequence $\{\mathcal{L}_k\}$ is made up of ``shifted''-Legendre polynomials on $[0, 1]$. As seen in \cite{baudouin:hal-01310306,courant1966courant,seuret:hal-01065142}, this family is orthonormal in $L^2$ with the canonical inner product. That leads to the following definition.
\begin{definition} Let $\chi \in L^2$. The projection of $\chi$ on the $\ell^{th}$ Legendre polynomials is defined as follows:
\[
	\Chi_\ell := \int_0^1 \chi(x) \mathcal{L}_\ell(x) dx.
\]
\end{definition}

The Bessel inequality is obtained considering the previous definitions and the orthogonal property of the shifted-Legendre family.
\begin{lemma}[Bessel Inequality] \label{Bess}
	For any function $\chi \in L^2$ and symmetric positive matrix $R \in \mathbb S^2_+$, the following Bessel-like integral inequality holds for all $N\in \mathbb N$:
	\begin{equation}\label{eq:Bessel}
		\int_{0}^1 \chi^{\top}(x) R \chi(x) dx  \supeq \sum_{\ell=0}^{N} (2\ell+1)  \Chi_\ell^{\top} R \Chi_\ell.
	\end{equation}
\end{lemma}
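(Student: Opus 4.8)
The plan is to reduce the matrix-weighted statement to the classical scalar Bessel inequality by factorising $R$. Since $R \in \S^2_+$, it admits a symmetric positive-definite square root $R^{1/2}$, so that $\chi^{\top} R \chi = |R^{1/2}\chi|^2$ pointwise in $x$. Setting $\tilde\chi = R^{1/2}\chi \in L^2$ and noting that $R$ is constant in $x$, the projection coefficients transform linearly, $\int_0^1 R^{1/2}\chi(x)\mathcal{L}_\ell(x)\,dx = R^{1/2}\Chi_\ell$. The whole claim then becomes the Euclidean Bessel inequality $\int_0^1 |\tilde\chi(x)|^2\,dx \supeq \sum_{\ell=0}^N (2\ell+1)\,|R^{1/2}\Chi_\ell|^2$, whose right-hand side equals $\sum_{\ell=0}^N (2\ell+1)\,\Chi_\ell^{\top} R \Chi_\ell$. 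This factorisation is the single device that removes the matrix weight cleanly.

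For the Euclidean inequality I would argue by orthogonal projection. The key quantitative input is the orthogonality relation of the shifted Legendre family, $\int_0^1 \mathcal{L}_k(x)\mathcal{L}_\ell(x)\,dx = \frac{1}{2\ell+1}\delta_{k\ell}$, which is exactly what produces the weights $(2\ell+1)$. I would introduce the residual of the truncated expansion,
\[
	e_N(x) = \tilde\chi(x) - \sum_{\ell=0}^N (2\ell+1)\,(R^{1/2}\Chi_\ell)\,\mathcal{L}_\ell(x),
\]
and start from the trivial bound $\int_0^1 |e_N(x)|^2\,dx \supeq 0$.

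Expanding this square and substituting the orthogonality relation, the double sum collapses to a single sum, and the cross term is exactly twice the quadratic term with opposite sign; the two combine to give $\int_0^1|e_N|^2\,dx = \int_0^1|\tilde\chi|^2\,dx - \sum_{\ell=0}^N (2\ell+1)\,|R^{1/2}\Chi_\ell|^2$. Non-negativity of the left-hand side then yields the desired bound, and undoing the substitution $\tilde\chi = R^{1/2}\chi$ recovers \eqref{eq:Bessel}. There is no genuine obstacle here: the only points requiring care are tracking the normalisation weight $(2\ell+1)$, which arises because $\|\mathcal{L}_\ell\|^2 = (2\ell+1)^{-1}$, and checking that $R^{1/2}$ commutes through the integral defining $\Chi_\ell$, which is immediate since $R$ does not depend on $x$.
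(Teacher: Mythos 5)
Your proof is correct and follows essentially the same route as the short proof the paper defers to in \cite{besselString}: one expands the nonnegative quantity $\int_0^1 e_N^{\top}(x) R\, e_N(x)\,dx$ with $e_N = \chi - \sum_{\ell=0}^N (2\ell+1)\Chi_\ell \mathcal{L}_\ell$ and uses the orthogonality relation $\int_0^1 \mathcal{L}_k(x)\mathcal{L}_\ell(x)\,dx = \delta_{k\ell}/(2\ell+1)$ to collapse the cross and quadratic terms. Your preliminary factorisation through $R^{1/2}$ is only a cosmetic variant of keeping $R$ inside the quadratic form, and your bookkeeping of the $(2\ell+1)$ weights is exactly right.
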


This lemma and its short proof can be seen in \cite{besselString}.

The derivation of $\Chi_\ell$ along time is needed in the sequel. Lemma~3 from \cite{besselString} deals with this issue.
\begin{lemma}\label{lem:Chi_k}
	For any function $\chi \in L^2$, the following expression holds for any $N$ in $\mathbb N$ using notations  \eqref{eq:defTheo}:
	\begin{equation*}
		\left[ \begin{smallmatrix}  \dot \Chi_0 \\ \vdots \\ \dot \Chi_{N} \end{smallmatrix} \right] = c\mathbb 1_N\chi(1)-c\bar {\mathbb 1}_N\chi(0) -cL_N \left[ \begin{smallmatrix}  \Chi_0 \\ \vdots \\ \Chi_{N} \end{smallmatrix} \right].
	\end{equation*}
\end{lemma}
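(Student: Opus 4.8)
The plan is to differentiate the projection $\Chi_\ell = \int_0^1 \chi(x,t)\mathcal{L}_\ell(x)\,dx$ under the integral sign, invoke the transport dynamics to trade the time derivative for a space derivative, and then integrate by parts so that the derivative falls onto the known polynomial $\mathcal{L}_\ell$. Concretely, differentiating in $t$ and using $\chi_t = c\,\chi_x$ from \eqref{eq:linWaveProblem} gives $\dot\Chi_\ell = \int_0^1 c\,\chi_x(x)\mathcal{L}_\ell(x)\,dx$, and one integration by parts yields
\[
	\dot\Chi_\ell = c\bigl[\chi(x)\mathcal{L}_\ell(x)\bigr]_0^1 - c\int_0^1 \chi(x)\mathcal{L}_\ell'(x)\,dx.
\]
Everything then reduces to (i) evaluating the boundary term and (ii) re-expanding the remaining integral back onto the Legendre basis.

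For the boundary term I would use the classical endpoint values of the shifted Legendre polynomials, $\mathcal{L}_\ell(1)=1$ and $\mathcal{L}_\ell(0)=(-1)^\ell$, which follow directly from the explicit expression in the definition. These produce exactly $c\,\chi(1) - c(-1)^\ell\chi(0)$, and stacking over $\ell = 0,\ldots,N$ reproduces the two terms $c\mathbb{1}_N\chi(1) - c\bar{\mathbb{1}}_N\chi(0)$, the alternating signs $(-1)^\ell$ being precisely the entries of $\bar{\mathbb{1}}_N$. The $I_2$ block in each row simply reflects that $\chi$ is $\mathbb{R}^2$-valued, so the scalar $\mathcal{L}_\ell$ acts componentwise.

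The remaining term is where the matrix $L_N$ is born, and this is the step I expect to be the crux. I would expand the derivative of each Legendre polynomial in the basis itself, $\mathcal{L}_\ell'(x) = \sum_{j=0}^{\ell-1}(2j+1)\bigl(1-(-1)^{\ell+j}\bigr)\mathcal{L}_j(x)$, the shifted-interval version of the classical Legendre differentiation recurrence (the coefficient vanishes unless $\ell+j$ is odd, consistent with $\mathcal{L}_\ell'$ having degree $\ell-1$). Substituting into $\int_0^1\chi\,\mathcal{L}_\ell'\,dx$ and using the definition of $\Chi_j$ gives $\int_0^1\chi(x)\mathcal{L}_\ell'(x)\,dx = \sum_{j=0}^{\ell}\ell_{\ell,j}\Chi_j$ with $\ell_{\ell,j}=(2j+1)(1-(-1)^{j+\ell})$ and vanishing diagonal $\ell_{\ell,\ell}=0$, matching the lower-triangular matrix $L_N$. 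Stacking these identities over $\ell=0,\ldots,N$ assembles $-cL_N[\Chi_0^\top\cdots\Chi_N^\top]^\top$ and completes the formula.

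The only genuinely delicate point is justifying the differentiation identity for $\mathcal{L}_\ell'$; I would either cite the standard Legendre recurrence and rescale from $[-1,1]$ to $[0,1]$ (introducing the factor $2$ that combines with $\mathbf 1[\ell+j\text{ odd}]$ to give $1-(-1)^{\ell+j}$), or verify the coefficients directly from orthogonality by computing $\langle\mathcal{L}_\ell',\mathcal{L}_j\rangle$ via a single integration by parts together with the endpoint values above. A minor caveat is that these manipulations presuppose enough regularity to differentiate under the integral and integrate by parts; this is legitimate for trajectories emanating from $\mathcal{D}(T)$, so the identity holds along the solutions of \eqref{eq:linWaveProblem} as required.
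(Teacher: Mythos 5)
Your proof is correct and follows exactly the argument behind the cited Lemma~3 of \cite{besselString}, which the paper invokes instead of reproving: differentiate under the integral, substitute $\chi_t = c\chi_x$, integrate by parts, and use the endpoint values $\mathcal{L}_\ell(1)=1$, $\mathcal{L}_\ell(0)=(-1)^\ell$ together with the shifted-Legendre differentiation formula to recover $\mathbb{1}_N$, $\bar{\mathbb{1}}_N$ and $L_N$. Your closing caveat about regularity (the identity holds along solutions issued from $\mathcal{D}(T)$, not for arbitrary $\chi\in L^2$ as the statement loosely says) is also the right reading of the lemma.
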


The link between $\alpha$-exponential stability and a Lyapunov functional is made by the following lemma.
\begin{lemma} \label{sec:lemmaEpsilon} Let $V$ be a Lyapunov functional for system \eqref{eq:linWaveProblem} and $\alpha \geq 0$. Assume there exist $\varepsilon_1, \varepsilon_2, \varepsilon_3 > 0$ such that the following holds for all $t \supeq 0$:
\begin{equation} \label{eq:epsilon}
	\left\{
	\begin{array}{l}
		 \varepsilon_1 \| (X, \chi) \|^2_{\mathcal H} \infeq V(X, \chi) \infeq \varepsilon_2 \| (X, \chi) \|^2_{\mathcal H}, \\
		\dot{V}(X, \chi) + 2 \alpha V(X, \chi) \infeq - \varepsilon_3 \| (X, \chi) \|^2_{\mathcal H},
	\end{array}
	\right.
\end{equation}
then system \eqref{eq:linWaveProblem} is $\alpha$-exponentially stable.
\end{lemma}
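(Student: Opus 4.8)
The plan is to establish the two inequalities in \eqref{eq:epsilon} directly from the definitions of $V$, $\dot V$, and $\| \cdot \|_{\mathcal H}$, since the conclusion then follows by a standard Gr\"onwall-type argument. First I would integrate the second inequality in \eqref{eq:epsilon}. Writing the differential inequality as $\frac{d}{dt}\left( e^{2\alpha t} V(X(t),\chi(\cdot,t)) \right) \infeq -\varepsilon_3 e^{2\alpha t} \| (X,\chi) \|^2_{\mathcal H} \infeq 0$, I conclude that $t \mapsto e^{2\alpha t} V(X(t),\chi(\cdot,t))$ is nonincreasing, hence $e^{2\alpha t} V(X(t),\chi(\cdot,t)) \infeq V(X^0,\chi_0)$ for all $t \supeq 0$. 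This is the crucial monotonicity step that injects the decay rate $\alpha$ into the estimate.

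Next I would convert this bound on $V$ into a bound on the $\mathcal H$-norm using the coercivity sandwich, the first line of \eqref{eq:epsilon}. From the lower bound $\varepsilon_1 \| (X,\chi) \|^2_{\mathcal H} \infeq V(X,\chi)$ applied at time $t$, together with the upper bound $V(X^0,\chi_0) \infeq \varepsilon_2 \| (X^0,\chi_0) \|^2_{\mathcal H}$ applied at the initial time, the monotonicity inequality yields
\[
	\varepsilon_1 \| (X(t),\chi(\cdot,t)) \|^2_{\mathcal H} \infeq V(X(t),\chi(\cdot,t)) \infeq e^{-2\alpha t} V(X^0,\chi_0) \infeq \varepsilon_2 e^{-2\alpha t} \| (X^0,\chi_0) \|^2_{\mathcal H}.
\]
Dividing by $\varepsilon_1 > 0$ and taking square roots then gives
\[
	\| (X(t),\chi(\cdot,t)) \|_{\mathcal H} \infeq \sqrt{\tfrac{\varepsilon_2}{\varepsilon_1}} \, e^{-\alpha t} \| (X^0,\chi_0) \|_{\mathcal H}.
\]

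Finally I would identify the constant $\gamma = \sqrt{\varepsilon_2/\varepsilon_1}$ required by the definition of $\alpha$-stability and verify $\gamma \supeq 1$. This last point is immediate: evaluating the sandwich inequality at $t=0$ forces $\varepsilon_1 \| (X^0,\chi_0) \|^2_{\mathcal H} \infeq \varepsilon_2 \| (X^0,\chi_0) \|^2_{\mathcal H}$, so $\varepsilon_1 \infeq \varepsilon_2$ and hence $\gamma \supeq 1$, matching the definition exactly. I do not expect any genuine obstacle here, as the argument is a textbook Lyapunov-to-exponential-decay implication; the only mild subtlety worth flagging is justifying the differentiation of $t \mapsto e^{2\alpha t} V$ along trajectories, which is licensed by taking $V$ to be differentiable along solutions (as is implicit in the hypothesis that $\dot V$ exists) and, for full rigor at the level of the infinite-dimensional state, by working with strong solutions guaranteed by the existence result established via the Lumer--Phillips theorem. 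The real analytical content of the paper lies not in this lemma but in exhibiting a concrete $V$ satisfying \eqref{eq:epsilon}, which is deferred to the construction in the subsequent subsections.
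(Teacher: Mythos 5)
Your argument is correct and follows essentially the same route as the paper: integrate the differential inequality (the paper keeps the $-\varepsilon_3\|\cdot\|^2_{\mathcal H}$ term via $-\frac{\varepsilon_3}{\varepsilon_2}V$ to get a marginally stronger rate before discarding it, whereas you simply drop it as nonpositive), then apply the two-sided bound on $V$ to obtain $\|(X(t),\chi(\cdot,t))\|^2_{\mathcal H} \infeq \frac{\varepsilon_2}{\varepsilon_1}\|(X^0,\chi_0)\|^2_{\mathcal H}e^{-2\alpha t}$. The only slip is the opening sentence, which suggests you would \emph{establish} the inequalities in \eqref{eq:epsilon} when they are in fact the hypotheses; the rest of your proof correctly treats them as assumptions.
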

\begin{proof}
Inequalities \eqref{eq:epsilon} bring the following: $\dot{V}(X,w) + \left (\alpha + \frac{\varepsilon_3}{\varepsilon_2} \right)  V(X, \chi) \infeq 0$. Then integrating this inequality between $0$ and $t$ leads to:
	\begin{equation*}
		 \| (X(t), \chi(t) ) \|^2_{\mathcal H} \infeq \frac{\varepsilon_2}{\varepsilon_1} \| (X^0, \chi_0) \|^2_{\mathcal H} e^{- 2 \alpha t}.
	\end{equation*}
\end{proof}

Once these useful lemmas reminded, a Lyapunov functional can be defined. 
\subsubsection{Lyapunov functional candidate}

The aim of this subpart is to build a Lyapunov functional candidate for system \eqref{eq:linWaveProblem}. Following the same methodology than introduced in \cite{besselString}, a first Lyapunov functional $\V_{\alpha}$ for the PDE part is defined with $S, R \in \mathbb{S}^2_+$:
\[
	\V_{\alpha}(\chi) = \int_0^1 e^{2 \frac{\alpha x}{c}} \chi^{\top}(x) (S + xR) \chi(x) dx,
\]

The Lyapunov functional candidate is then the summation of a quadratic term and $\V_{\alpha}$. This quadratic term contains the stability of state $X$ but also some terms merging the ODE and the PDE. This is done to enlarge the stability analysis, enabling the study of stability of the whole interconnected system and not of each subsystem independently. This technique, as shown in \cite{besselString}, is well-suited for the study of an unstable ODE coupled with a PDE for instance. The total Lyapunov function of order $N \in \mathbb{N}$ is then:
\begin{equation} \label{eq:VN}
	V_{N, \alpha}(X, \chi) = X_N^{\top} P_N X_N + \V_{\alpha}(\chi)
\end{equation}
with $P_N \in \mathbb{S}^{n+2+2(N+1)}_+$ and $X_N = \left[ \begin{matrix} X^{\top} \!\!& \Chi_0^{\top} \!\!& \dots \!\!& \Chi_N^{\top} \end{matrix} \right]^{\top}\!\!$.

The aim now is to prove the existence of $\varepsilon_1, \varepsilon_2$ and $\varepsilon_3 > 0$ to apply Lemma~\ref{sec:lemmaEpsilon} on the functional $V_{N,\alpha}$ and then conclude the proof.
\subsubsection{Existence of $\varepsilon_1$}
Conditions $P_N \succ 0$ and $S, R \in \mathbb{S}^2_+$ mean that there exists $\varepsilon_1 > 0$, such that for all $x \in [0, 1]$:
	\vspace{-0.3cm}
	\[
		\begin{array}{rcl}
			P_N \!\!&\succeq& \!\!\!\varepsilon_1 \text{diag} \left( I_{n+2}, 0_2 \right) , \\
			S+xR \ \succeq \ S &\succeq& \!\!\!\frac{\varepsilon_1}{2}  I_2.
		\end{array}
	\]
	
	These inequalities imply:
	\[
		\begin{array}{lcl}
			V_{N,\alpha}(X, w) \!\!\! & \supeq & \!\!\!\varepsilon_1 \left( |X|^2 + \frac{1}{2} \|\chi\|^2 \right) \\
			&& + \int_0^1\chi^\top(x)\left(S + xR - \frac{\varepsilon_1}{2} I_2\right)\chi(x)dx\\
			&\supeq& \!\!\!\varepsilon_1 \left( |X|_n^2 + \frac{1}{2} \|\chi\|^2 \right) \supeq \varepsilon_1 \|(X, \chi)\|_{\mathcal{H}}^2.
		\end{array}
	\]

\subsubsection{Existence of $\varepsilon_2$}
	Since $P_N, S$ and $R$ are definite positive matrices, there exists $\varepsilon_2 > 0$ such that:
	\vspace{-0.1cm}
	\[
		\begin{array}{rcl}
			P_N \!\!\!& \preceq \!\!\!& \text{diag} \left( \varepsilon_2 I_{n+2}, \frac{\varepsilon_2}{4} \text{diag} \left\{ (2\ell+1) I_n \right\}_{\ell \in (0, N)} \right), \\
			(S+xR) \!\!\!& \preceq \!\!\!& S + R \ \preceq \ \frac{\varepsilon_2}{4} e^{-2 \frac{\alpha}{c}}I_2,\quad \forall x\in (0,1).
		\end{array}
	\]
	
	Then, we get:
	\vspace{-0.1cm}
	\begin{equation*}
		\begin{array}{lll}
			V_{N,\alpha}(X, \chi) 
				&\!\!\!\! \infeq &\!\!\!\! \displaystyle\varepsilon_2 |X|^2  \vphantom{\sum_{\ell=0}^{N}} \! +\! \frac{\varepsilon_2}{4}\! \left( \sum_{\ell=0}^{N} (2\ell\!+\!1) \Chi_\ell^{\top} \Chi_\ell + \| \chi \|^2 \right) \\
&\!\!\!\! \infeq &\!\!\!\! \varepsilon_2 \left( |X|^2\! +\! \frac{1}{2} \|\chi\|^2 \right) = \varepsilon_2 \|(X,\chi)\|_{\mathcal{H}}^2.
		\end{array}
	\end{equation*}
	
The inequality comes from Bessel inequality \eqref{eq:Bessel}.

\subsubsection{Existence of $\varepsilon_3$}

This part is the most important and shows that system \eqref{eq:linWaveProblem} is dissipative \cite{besselString,tucsnak2009observation}. Differentiating with respect to time \eqref{eq:VN} along the trajectories of system \eqref{eq:linWaveProblem} leads to:
	\begin{equation*}
		\dot{V}_{N,\alpha}(X, w) = \text{He} \left( \left[ \begin{smallmatrix} \dot{X} \\  \dot{\Chi}_0 \\ \vdots \\ \dot{\Chi}_N \end{smallmatrix} \right]^{\top} P_N \left[ \begin{smallmatrix} {X} \\  {\Chi}_0 \\ \vdots \\ {\Chi}_N\end{smallmatrix} \right]  \right) + \dot{\V}_{\alpha}(w).
	\end{equation*}

	The goal here is to find an upper bound of $\dot{V}_{N,\alpha}$ using the extended state: $\xi_N = \left[ X_N^{\top} \ \ w_t(1) \ \ w_t(0) \right]^{\top}$. The first step is to derive an expression of $\dot{\V}_{\alpha}$. Similarly to \cite{besselString}, we get:
	\[
		\hspace*{-0.12cm}
		\begin{array}{ll}
			\dot{\mathcal{V}}_{\alpha}(\chi) \!\!\!\!\!&= 2 c \int_0^1 \chi^{\top}_x(x) (S + xR) \chi(x) e^{2 \frac{\alpha x}{c}} dx \\
			&= 2c \left( \chi^{\top}(1) (S+R) \chi(1) e^{2 \frac{\alpha}{c}} - \chi^{\top}(0) S \chi(0) \right. \quad \quad \quad \\
			& \hfill \left.- \int_0^1 \chi^{\top}(x) R \chi(x) e^{2 \frac{\alpha x}{c}} dx \right) - 4 \alpha \V_{\alpha}(\chi) - \dot{\V}_{\alpha}(\chi) \\
			&= c \left( \chi^{\top}(1) (S+R) \chi(1) e^{2 \frac{\alpha}{c}} - \chi^{\top}(0) S \chi(0) \right. \quad \quad \quad \\
			& \hfill \left.- \int_0^1 \chi^{\top}(x) R \chi(x) e^{-2 \frac{\alpha x}{c}} dx \right) - 2 \alpha \V_{\alpha}(\chi).
		\end{array}
	\]
	
	Using the previous equation, Lemma~\ref{lem:Chi_k} and equation \eqref{eq:problem}, we note that $X_N = F_N\xi_N, ~ \dot X_N = Z_N\xi_N, ~ \chi(0) = G_{N}\xi_N, ~ \chi(1) = H_{N}\xi_N$ where matrices $F_N, Z_N, H_{N}, G_{N}$ are given in \eqref{eq:defTheo}. Then we can write:
	\vspace{-0.22cm}
	\begin{multline*}
		\dot{V}_{N,\alpha}(X, \chi) = \xi_N^{\top} \Psi_{N,\alpha} \xi_N + c \sum_{\ell=0}^{N} \Chi_\ell^{\top} (2\ell+1) R \Chi_\ell  \\
		-c  \int_0^1 \chi^{\top}(x) R \chi(x) e^{2 \frac{\alpha x}{c}} dx - 2 \alpha V_{N,\alpha}(X, \chi).
	\end{multline*}
	
Denoting by $W_{N,\alpha}(X, \chi) = \dot{V}_{N,\alpha}(X, \chi) + 2 \alpha V_{N, \alpha}(X, \chi)$, the previous equality implies the following upper bound:
\begin{multline} \label{eq:WN}
	W_{N,\alpha}(X, \chi) \infeq \xi_N^{\top} \Psi_{N,\alpha} \xi_N + c \sum_{\ell=0}^{N} (2\ell+1)  \Chi_\ell^{\top} R \Chi_\ell  \\
		-c  \int_0^1 \chi^{\top}(x) R \chi(x) dx.
\end{multline}
	
Since  $R \succ 0$ and $\Psi_{N,\alpha} \prec 0$, there exists $\varepsilon_3 > 0$ such that:
	\begin{equation} \label{eq:Rpos2} 
		\begin{array}{ccl}
			R \!\!\!&\succeq & \ \frac{\varepsilon_3}{2} I_2,\\
			\Psi_{N,\alpha} \!\!\!&\preceq &\!\!\! -\varepsilon_3 \text{diag} \left(  I_{n+2}, \frac{1}{2} I_2, \frac{3}{2}I_2,\dots, \frac{2N\!+\!1}{2} I_2, 0_2 \right)\!.
		\end{array}
	\end{equation}
	
Using \eqref{eq:Rpos2} and Bessel's inequality, equation \eqref{eq:WN} becomes:
	\begin{equation*}
		W_{N,\alpha}(X,\chi) \infeq \!\!- \varepsilon_3 \left( \!|X|^2 + \frac{1}{2} \| \chi \|^2 \!\right) \infeq \!\!- \varepsilon_3 \ \| (X, \chi) \|^2_{\mathcal H},
	\end{equation*}
	and that concludes the proof.

\section{Examples and discussion}

In this section, we illustrate the proposed theorem by using values taken from \cite{marquez2015analysis, saldivar2016control} and shown in Table \ref{tab:values}. The simulation is based on a finite-difference method of order $2$. The two cases under study here are summarized below:
\begin{enumerate}
	\item the feedforward control with $n = 0$ (using only $u_1^e$ and $u_2^e$ in \eqref{eq:feedforward}) and 
		\begin{equation} \label{eq:uncontrolled}
			\begin{array}{lll}
				C_1 = \left[ \begin{smallmatrix} 0 & 0 \end{smallmatrix} \right], & C_2 = 0, & K = \left[ \begin{smallmatrix} 0 & 0 \end{smallmatrix} \right].
			\end{array}
		\end{equation}	
	\item a dynamic control with the following parameters:
		\begin{equation} \label{eq:controlled}
			\hspace{-1cm}
			\begin{array}{lll}
				\tilde{A} = \left[ \begin{smallmatrix} -800 & 0 \\ 0 & -150 \end{smallmatrix} \right], & B_{c1} = 0_{2,2}, & \!\! B_{c2} = I_2, \\
				C_1 = \left[ \begin{smallmatrix} 800 & 0.015 & 0.01 & -0.1 \end{smallmatrix} \right], & C_2 = \left[ \begin{smallmatrix} 0 & - 0.0718 \end{smallmatrix} \right], \\
				 K = \left[ \begin{smallmatrix} -82.2 & \ 10.4 \end{smallmatrix} \right].
			\end{array}
			\hspace{-1cm}
	\end{equation}
\end{enumerate}

The dynamic controller is obtained considering two low-pass filters. Denote by $s \in \mathbb{C}$ the Laplace variable, the two transfer functions for the low-pass filters are $\frac{u_1}{w_t(0)} = \frac{1}{1 + s \omega_{c1}}$ and $\frac{u_1}{w_t(1)} = \frac{1}{1 + \omega_{c2}}$ with the cut-off frequencies $\omega_{c1} = 800$ and $w_{c2} = 150$. Gain $K$ has been chosen such that the eigenvalues of $A+BK$ are $-2.4603 \pm 0.1230 i$. $C_2$ has been chosen to cancel the dependence on $w_t(1,\cdot)$ in the ODE.

With the feedforward controller only, it is possible to estimate the decay-rate of the solution. Indeed, there is no real coupling between the ODE and the PDE and the decay-rate of the interconnected system will be the smallest between their respective ones. Here, the PDE has a decay-rate given by equation \eqref{eq:alphaMax} of $1.2302$ and the ODE is $0.2159$. The results of Theorem~\ref{sec:theoLin} is given in Table~\ref{tab:alphaMax}. The maximum decay-rate for the feedforward case is obtained for $N \supeq 1$ and is, as expected, the decay-rate of the ODE.

Figure~\ref{fig:simu} shows the time response of system \eqref{eq:linWaveProblem} in the two cases. The initial state for this computation is $X^0 = 0, w(x,0) = 2 - \Omega_e x$ and $w_{t}(x,0) = \frac{\Omega_e - q T^e}{k} x - \frac{u_1^e - \Omega_e}{g} (1-x)$ for $x \in (0, 1)$. Of course states $X_1$ and $X_2$ are much faster, which results from the direct influence of static feedback gain $K$ but also the speed $w_t(1)$, which is more regular and converges faster to $0$. Indeed, as shown in Table \ref{tab:alphaMax}, the speed is much faster in the situation with the dynamic control. The hierarchy of Remark~\ref{sec:hierarchy} is clearly visible and reaches its maximum value (up to three a 3 digits precision) at $N = 2$. If $d > 0$, one can notice a slightly higher decay rate but the limit remains the same. One of the drawback of such a system is the angular speed $w_t(x,\cdot)$ for $x \in (0,1)$, which increases significantly compared to the first case as it is possible to see on Figure~\ref{fig:simu3D}. 

\begin{remark} A backstepping control law could have been considered with a target system of arbitrary large decay-rate. Compared to this method, the price to pay for a finite dimension controller is seen by equation~\eqref{eq:alphaMax}. Indeed, it is not possible to accelerate the system with an arbitrary large decay-rate. Other differences are that there is no design methodology using LMI yet and the control is a finite-dimension state-feedback using the knowledge of only $Y$, $w_t(0)$ and $w_t(1)$ with strictly proper controllers.\end{remark}

\begin{table}
	\centering
	\begin{tabular}{c|c||c|c}
		Symbol & Value & Symbol & Value \\
		\hline
		$c$ & $2.6892$ m.s${}^{-1}$ & $\Omega_e$ & $10$ rad.s${}^{-1}$  \\
		$k$ & $0.1106$ s.m${}^{-1}$ & $g$ & $2.48$ s.m${}^{-1}$ \\
		$A_{21}$ & $-41.58$ s${}^{-2}$ & $A_{22}$ & $-0.43$ s${}^{-1}$\\
		$e_1$ & $-8.35$ m.s${}^{-1}$.rad${}^{-1}$ &$e_2$ & $-0.069$ m${}^{-1}$.kg${}^{-1}$ \\
		$b$ & $-0.43$ s${}^{-1}$ & $T^e$ & $7572.4$ N.m \\
		$q$ & $0.0012$ N${}^{-1}$.m${}^{-1}$
	\end{tabular}
	\vspace{0.2cm}
	\caption{Coefficient values taken for the simulations.}
	\label{tab:values}
\end{table}

\begin{table}
	\centering
	\begin{tabular}{c|ccccc}
		Type of control& $N = 0$ & $N = 1$ & $N = 2$ & $N =3$ & $\alpha_{max}$ \\
		\hline
		Feedforward & $0.2157$ & $0.2159$ & $0.2159$ & $0.2159$ & $1.2302$ \\
		Dynamic & $0.4972$ & $0.4972$ & $1.000$ & $1.000$ & $1.2302$
	\end{tabular}
	\vspace{0.1cm}
	\caption{Maximum decay-rate $\alpha$ using Theorem~\ref{sec:theoLin} at an order $N$. The feedforward controller refers to \eqref{eq:uncontrolled} while the dynamic controller is with \eqref{eq:controlled}. $\alpha_{max}$ is calculated using \eqref{eq:alphaMax}.}
	\label{tab:alphaMax}
\end{table}


\begin{figure*}
	\centering
	\includegraphics[trim = 2.75cm 0cm 2.75cm 0cm, width=18cm]{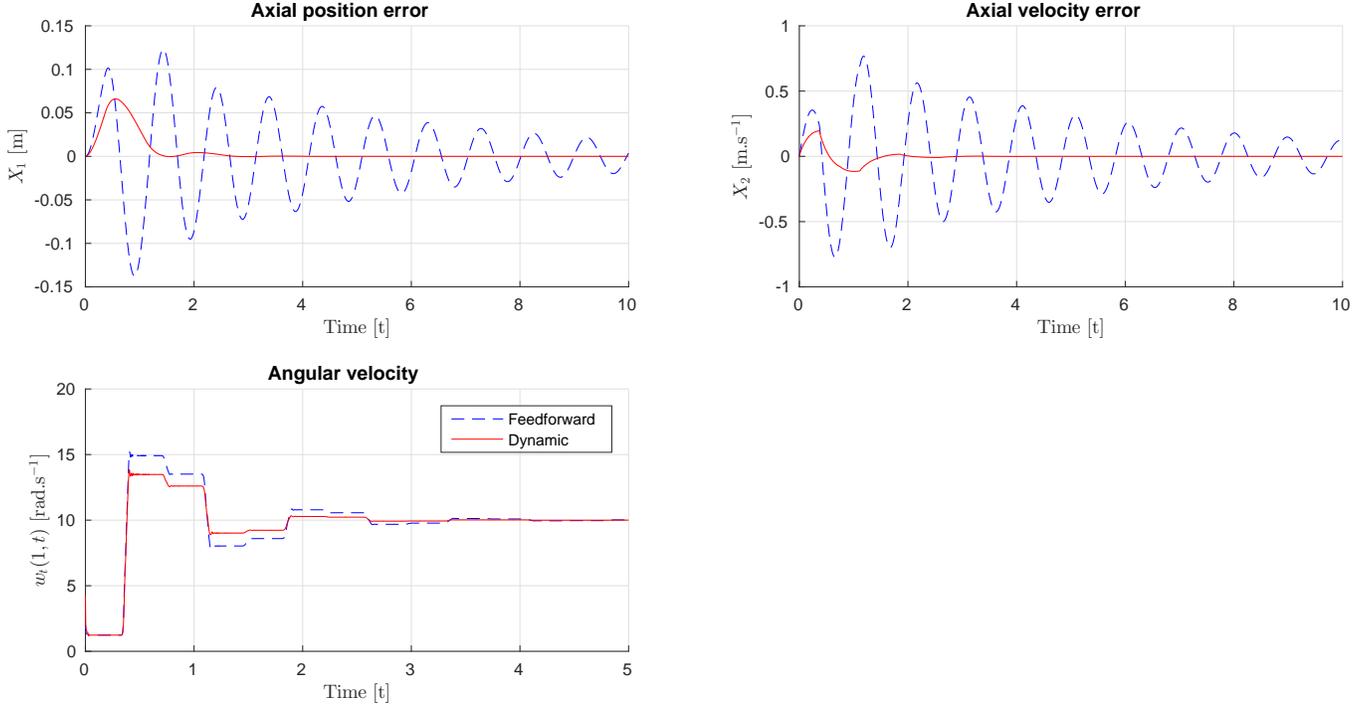}
	\caption{Simulation on the feedforward and dynamic controlled system. }
	\label{fig:simu}
\end{figure*}

\begin{figure}
	\centering
	\includegraphics[trim = 0.5cm 0cm 0.5cm 0cm, clip, width=8.5cm]{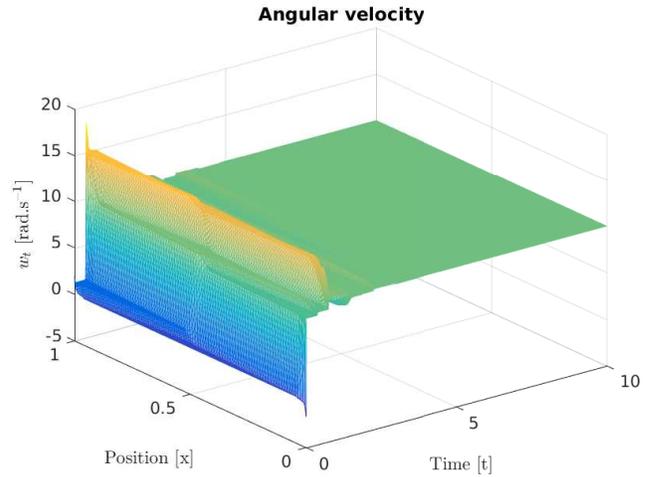}
	\caption{Angle velocity $w_t$ in the situation with dynamic control.}
	\label{fig:simu3D}
\end{figure}

\section{Conclusion}
We have studied the stability of a drilling mechanism, which dynamics can be modeled as a coupled ODE/PDE. Approximating this model around a desired equilibrium point leads to an interconnected ODE / damped wave equation. Therefore, the stability of this coupled system is studied using a Lyapunov approach and the stability condition of such a system has been expressed in terms of LMI. Using Bessel inequality, we provided a hierarchy of LMI conditions for this kind of interconnected system with linear feedback controllers. Using only strictly proper hand-designed controllers, a control law has been derived improving subsequently the decay-rate of the system. Further studies would investigate how to automatically design such controllers.

\section{ACKNOWLEDGMENTS}

The authors gratefully acknowledge anonymous reviewers' comments. This work is supported by the ANR project SCIDiS contract number 15-CE23-0014.

\bibliographystyle{plain}
\bibliography{report_draft}

\end{document}